\newtheorem{theorem}{Theorem}[section]
\newtheorem{lemma}[theorem]{Lemma}
\newtheorem{question}[theorem]{Question}
\numberwithin{equation}{section}
\newcommand{\DD}{\mathcal{D}}
\newcommand{\GG}{\mathfrak{G}}
\begin{document}

\title[Free locally convex spaces having countable tightness]{A characterization of  free locally convex spaces over metrizable spaces which have countable tightness}

\author{S. S. Gabriyelyan}
\address{Department of Mathematics, Ben-Gurion University of the Negev, Beer-Sheva P.O. 653, Israel}
\email{saak@math.bgu.ac.il}

\subjclass[2000]{Primary 22A05, 54H11; Secondary 46A03,   54C35}

\keywords{Free locally convex space, free abelian topological group, countable Pytkeev network, the strong Pytkeev property, countable tightness}

\begin{abstract}
We prove that the free locally convex space $L(X)$ over a metrizable space $X$ has countable tightness if and only if $X$ is separable.
\end{abstract}

\maketitle

\section{Introduction}

A topological space $X$ is called {\it first countable} if it has a countable open base at each point. Any first countable topological group is metrizable. Various topological properties generalizing first countability have been studied intensively by topologists and analysts, especially Fr\'{e}chet-Urysohness, sequentiality, to be a $k$-space and countable tightness (see \cite{Eng,kak}). It is well know that, 
metrizability $\Rightarrow$ Fr\'{e}chet-Urysohness $\Rightarrow$ sequentiality $\Rightarrow$ countable tightness, and sequentiality $\Rightarrow$ to be a $k$-space. Although none of these implications is reversible, for many important classes of locally convex spaces (lcs for short) some of them can be reversed. K{\c{a}}kol 
showed that for an $(LM)$-space (the inductive limit of a sequence of locally convex metrizable spaces), metrizability $\Leftrightarrow$ Fr\'{e}chet-Urysohness. The Cascales and Orihuela result states that for an $(LM)$-space, sequentiality $\Leftrightarrow$ to be a $k$-space.
Moreover, K{\c{a}}kol  and Saxon \cite{KS} proved the next structure theorem: An $(LM)$-space $E$ is sequential (or a $k$-space) if and only if $E$ is metrizable or is a Montel $(DF)$-space.
Topological properties of a lcs  $E$ in the weak topology $\sigma(E,E')$ are of the importance and have been intensively studied from many years (see  \cite{kak,bonet}).   Corson (1961)  started a systematic study of certain topological properties of the weak topology of Banach spaces. If $B$ is any infinite-dimensional Banach space, a classical result of Kaplansky states that $(E,\sigma(E,E'))$ has countable tightness (see \cite{kak}), but the weak dual $(E',\sigma(E',E))$ is not a $k$-space (see \cite{KS}). Note that there exists a $(DF)$-space with uncountable tightness whose weak topology has countable tightness \cite{CKS}.
 We refer the reader to the book \cite{kak} for many references and facts.



In this paper we consider another class in the category $\mathbf{LCS}$ of locally convex spaces and continuous linear operators which is the most important from the categorical point of view, namely the class of free locally convex spaces over Tychonoff spaces introduced by Markov \cite{Mar}.  Recall that the {\it  free locally convex space} $L(X)$ over a Tychonoff space $X$ is a pair consisting of a locally convex space $L(X)$ and  a continuous mapping $i: X\to L(X)$ such that every  continuous mapping $f$ from $X$ to a locally convex space $E$ gives rise to a unique continuous linear operator ${\bar f}: L(X) \to E$  with $f={\bar f} \circ i$. The free locally convex space $L(X)$  always exists and is  unique. The set $X$ forms a Hamel basis for $L(X)$, and  the mapping $i$ is a topological embedding \cite{Rai, Flo1, Flo2, Usp}. It turns out that excepting the trivial case when $X$ is a countable discrete space, the free  lcs $L(X)$ is never a $k$-space \cite{Gabr}: For  a Tychonoff space $X$,  $L(X)$ is a $k$-space if and only if $X$ is a countable discrete space.

The aforementioned results explain our interest to the following problem.
\begin{question} \label{qFreeTight}
For which Tychonoff spaces $X$ the free  lcs $L(X)$ has countable tightness?
\end{question}

We obtain a complete answer to Question \ref{qFreeTight} for the important case when $X$ is metrizable. The following theorem is the main result of the article.
\begin{theorem} \label{tFreeMain}
Let $X$ be a metrizable space. Then the free lcs $L(X)$ has countable tightness if and only if $X$ is separable.
\end{theorem}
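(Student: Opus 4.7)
The plan is to address the two directions separately.

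\emph{Necessity.} Suppose $L(X)$ has countable tightness and $X$ is not separable. Standard metric space theory (via maximal $1/n$-separated subsets) provides a closed discrete subspace $D \subseteq X$ of cardinality $\aleph_1$. Since $X$ is metrizable, Tietze's theorem extends every $f \in C(D,\RR)$ to some $\tilde{f} \in C(X,\RR)$, and by the universal property of the $L$ functor this makes $L(D)$ a topological vector subspace of $L(X)$. Countable tightness passes to subspaces, so it suffices to find uncountable tightness in $L(D) = \bigoplus_{d \in D} \RR$. Take
\[
A = \Bigl\{\tfrac{1}{k^2} \sum_{d \in F} e_d : k \in \NN,\ F \subseteq D,\ |F| = k \Bigr\}.
\]
A neighborhood of $0$ has the form $V_\epsilon = \{v : \sum_d |v_d|/\epsilon_d < 1\}$ for some $\epsilon = (\epsilon_d)_{d \in D}$. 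Since $D = \bigcup_{k \geq 2}\{d : \epsilon_d \geq 1/(k-1)\}$, some level set $S_{k_0}$ is uncountable; choosing $F \subseteq S_{k_0}$ of size $k_0$ gives $\sum_{d \in F}(1/k_0^2)/\epsilon_d \leq (k_0 - 1)/k_0 < 1$, so $V_\epsilon \cap A \neq \emptyset$, hence $0 \in \overline{A}$. On the other hand, any countable $B \subseteq A$ is supported on some countable $J \subseteq D$, so $B \subseteq \bigoplus_{d \in J}\RR$, a closed subspace of $\bigoplus_D \RR$ that is a strict $(LF)$-space, hence sequential. A sequence in $B$ converging to $0$ would eventually be supported on some finite $N \subseteq J$ and converge to $0$ in $\RR^N$, but any $v \in A \cap \RR^N$ has $\|v\|_1 = 1/k \geq 1/|N|$, so no such sequence exists. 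Thus $0 \notin \overline{B}$, witnessing uncountable tightness of $L(D)$, contrary to assumption.

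\emph{Sufficiency.} Assume $X$ is separable metrizable; we want countable tightness of $L(X)$. The approach suggested by the paper's keywords is to construct a countable strong Pytkeev network at $0 \in L(X)$: a countable family $\mathcal{P}$ such that for every neighborhood $V$ of $0$ and every $A$ with $0 \in \overline{A} \setminus \{0\}$, some $P \in \mathcal{P}$ satisfies $P \subseteq V$ and $P \cap A$ infinite. Such $\mathcal{P}$ forces countable tightness: pick a countable $B_n \subseteq P_n \cap A$ whenever this is infinite, and set $B = \bigcup_n B_n$; then every neighborhood $V$ of $0$ contains some $P_n$ with $P_n \cap A$ infinite, whence $B_n \subseteq B \cap V$ and hence $0 \in \overline{B}$. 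By translation invariance, tightness at $0$ propagates to every point.

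Fix a countable base $\{U_n\}$ of $X$. Define cells
\[
P(\vec{n}, \vec{q}, m) = \Bigl\{\sum_{i=1}^k c_i x_i \in L(X) : x_i \in U_{n_i},\ |c_i - q_i| < 1/m\Bigr\},
\]
indexed by same-length finite tuples $\vec{n} \in \NN^{<\omega}$, $\vec{q} \in \mathbb{Q}^{<\omega}$, and $m \in \NN$; this family is countable. The bulk of the work, and the main obstacle, is verifying the Pytkeev property: given a seminorm neighborhood $V_f$ of $0$ (for $f \in C(X)$) and $A$ accumulating at $0$, one must exhibit a cell $P \subseteq V_f$ with $P \cap A$ infinite. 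Three nested pigeonholes extract an infinite $A_0 \subseteq A$ of common combinatorial type: first on the support size of elements of $A$, then on which base set $U_n$ contains each support coordinate, and finally on rational approximations of the coefficients. Ensuring that the resulting cell fits inside $V_f$ uses continuity of $f$: by refining the chosen base sets, the oscillation of $f$ on each can be made as small as needed, bounding $p_f$ uniformly on the cell and securing $P \subseteq V_f$.
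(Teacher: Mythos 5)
Your overall architecture (reduce necessity to uncountable discrete pieces, prove sufficiency via a countable Pytkeev network at $0$) parallels the paper, but both directions have genuine gaps. The more serious one is in sufficiency: the sets $V_f=\{v:|\hat f(v)|<1\}$ for single $f\in C(X)$ are \emph{not} a base of neighborhoods of $0$ in $L(X)$ --- they generate only the weak topology $\sigma(L(X),C(X))$, which is strictly coarser than the free locally convex topology (the latter is generated by the seminorms associated with continuous \emph{pseudometrics} on $X$, per Ra\u{\i}kov--Uspenski\u{\i}). So verifying your cells against $V_f$ proves nothing about tightness of $L(X)$; and even at that level the ``three pigeonholes'' are only a sketch (there are infinitely many tuples $\vec n$ of basic sets and the coefficients of elements of $A$ are a priori unbounded, so neither of the last two pigeonholes produces an infinite subclass without further argument). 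The paper avoids all of this: since a metrizable $X$ is a $k$-space, $L(X)$ embeds into $C_c(C_c(X))$, and Banakh's theorem (that $C_c$ of an $\aleph_0$-space is a Pytkeev $\aleph_0$-space), applied twice starting from the separable metrizable (hence $\aleph_0$-) space $X$, yields that $L(X)$ is a Pytkeev $\aleph_0$-space. Some such input seems unavoidable; your direct construction would in effect have to reprove it.

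In the necessity direction, the claim that $L(D)$ sits in $L(X)$ as a topological vector subspace does not follow from Tietze plus the universal property: extending continuous functions one at a time again controls only the weak topology on the span of $D$, whereas you need the induced topology to be as fine as the locally convex direct sum topology. What is actually needed is that $D$ is $P$-embedded (continuous pseudometrics on $D$ extend to $X$); this is true for closed subsets of metric spaces but is a nontrivial theorem that must be invoked. The paper sidesteps it by first applying the Arhangel'skii--Okunev--Pestov theorem to $A(X)\subseteq L(X)$ to make $X'$ separable, and then working with a \emph{clopen} uniformly isolated set $D_0$, for which the retraction $x\mapsto x$ on $D_0$, $x\mapsto e$ off $D_0$, gives the embedding (Lemma \ref{lFG}); note that your maximal $1/n$-separated set need not be open in $X$ (e.g.\ in a non-separable Hilbert space there is no uncountable clopen discrete subspace at all), so the retraction trick is unavailable to you. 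Finally, your conclusion ``no sequence in $B$ converges to $0$, hence $0\notin\overline B$'' uses Fr\'echet--Urysohness, not sequentiality; $\bigoplus_{J}\RR$ is sequential but not Fr\'echet--Urysohn. This last slip is repairable: enumerating $J=\{d_1,d_2,\dots\}$ and taking $\epsilon_{d_j}=2^{-j}$ gives $\sum_{d\in F}(1/k^2)/\epsilon_d\ge (2^{k+1}-2)/k^2>1$ for every $F\subseteq J$ with $|F|=k$, so $V_\epsilon$ misses $B$ outright.
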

Below we prove even a stronger result (see Theorem \ref{tFreeV}).

\section{Proof of Theorem \ref{tFreeMain}}

The free (resp. abelian) topological group $F(X)$ (resp. $A(X)$) over a Tychonoff space $X$ were also introduced by Markov \cite{Mar} and intensively studied over the last half-century (see \cite{Gra,MMO,Rai,Tkac,Usp}), we refer the reader to \cite[Chapter 7]{ArT} for basic definitions and results.
We note that the topological  groups $F(X)$ and $A(X)$ always exist and are essentially unique. 
Note also that the identity map $id_X :X\to X$ extends to a canonical homomorphism $id_{A(X)}: A(X)\to L(X)$ which is an embedding of topological groups \cite{Tkac, Usp2}.

Recall that a space $X$ has {\it countable tightness} if whenever $x\in \overline{A}$ and $A\subseteq X$, then $x\in \overline{B}$ for some countable $B\subseteq A$.
We use the following remarkable result of Arhangel'skii, Okunev and Pestov which shows that the topologies of $F(X)$ and $A(X)$ are rather complicated and unpleasant even for the simplest case of a metrizable space $X$.
\begin{theorem}[\cite{AOP}] \label{tAOP}
Let $X$ be a metrizable space. Then:
\begin{enumerate}
\item[{\rm (i)}] The tightness of $F(X)$ is countable if and only if $X$ is separable or discrete.
\item[{\rm (ii)}] The tightness of $A(X)$ is countable if and only if the set $X'$ of all non-isolated points in $X$  is separable.
\end{enumerate}
\end{theorem}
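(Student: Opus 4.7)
The plan is to prove the four implications in parallel, treating the sufficiency directions (where countable tightness holds) as routine and the two necessity directions as the main content. Both parts (i) and (ii) are handled together because the canonical embedding $A(X) \hookrightarrow F(X)$ coming from abelianization ties the two groups closely.

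Sufficiency in both parts: If $X$ is separable metrizable, then $X$ has a countable network, and by Arhangel'skii's classical network-preservation theorem $nw(F(X)) = nw(A(X)) = nw(X) = \aleph_0$, so each group has a countable network and hence countable tightness. This covers (i) when $X$ is separable and (ii) when $X$ itself is separable. For (i) when $X$ is discrete (possibly uncountable) but not separable, I would argue directly from the stratification $F(X) = \bigcup_n F_n(X)$: each $F_n(X)$ is a continuous image of $(X \sqcup X^{-1} \sqcup \{e\})^n$, and since the alphabet is itself discrete, convergent nets in $F_n(X)$ are visibly controlled by only countably many letters. For (ii) when $X$ is non-discrete but $X'$ is separable, decompose $X = I \sqcup X'$ where $I$ is the set of isolated points; then $A(X) \cong A(I) \oplus A(X')$ algebraically, countable tightness of the summand $A(X')$ follows from its countable network, and the ``isolated'' summand $A(I)$ contributes no new convergence phenomena.

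Necessity in (ii): Assume $X'$ is non-separable. Since $X$ is metrizable, non-separability of $X'$ yields an uncountable closed discrete subset $D = \{d_\alpha : \alpha < \omega_1\} \subseteq X'$, and for each $\alpha$ I pick a sequence $x_{\alpha,n} \to d_\alpha$ with $x_{\alpha,n} \ne d_\alpha$, available because $d_\alpha$ is non-isolated. Using these data I construct a witness set $A \subseteq A(X)$ together with a point $p \in A(X)$ such that $p \in \overline{A}$ but $p \notin \overline{B}$ for every countable $B \subseteq A$. The second clause is the substantive part: given a countable $B$, only countably many indices $\alpha_k$ enter, and exploiting that $D$ is discrete I can pick pairwise disjoint neighborhoods $V_\alpha$ of the $d_\alpha$, then invoke the Graev--Markov procedure to lift a carefully chosen continuous pseudo-metric on $X$ (tailored to separate the countable data defining $B$) to an invariant pseudo-metric on $A(X)$, producing a neighborhood of $p$ disjoint from $B$. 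Necessity in (i) runs along the same lines but uses both the uncountable discrete $D$ (from non-separability) and a non-trivial convergent sequence $y_n \to y_0$ in $X \setminus D$ (from non-discreteness), building the witness set out of non-abelian words combining elements of $D$ with the $y_n$ and applying the Graev pseudo-metric machinery on $F(X)$.

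The main obstacle is the construction and verification of the witness $A$ and $p$ in the two necessity directions: $A$ must be ``diagonally'' spread across the uncountably many $\alpha$ in a way that no countable subset can reach $p$, and the Graev pseudo-metric separation from an arbitrary countable $B$ must go through. These steps are precisely what the hypotheses in the theorem guard against, which is what forces the exact form of the statement: separability of $X$ (in (i)) or of $X'$ (in (ii)) eliminates the uncountable $D$, while discreteness of $X$ (in (i)) eliminates the sequence $y_n \to y_0$ needed to drive the $F(X)$-construction.
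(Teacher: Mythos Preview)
The paper does not give its own proof of this theorem: it is stated as Theorem~\ref{tAOP} with the citation \cite{AOP} (Arhangel'skii, Okunev, Pestov) and then used as a black box in the proof of Theorem~\ref{tFreeV}. There is therefore no in-paper argument to compare your proposal against.

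On the proposal itself: your necessity directions follow the standard line of the original \cite{AOP} argument --- extract an uncountable closed discrete $D\subseteq X'$ from non-separability, build a diagonal witness set out of convergent sequences to the $d_\alpha$, and separate any countable subfamily via a Graev pseudo-metric. That outline is sound. Your sufficiency side, however, has gaps. For (ii), the splitting $A(X)\cong A(I)\oplus A(X')$ you invoke is only algebraic; the set $I$ of isolated points need not be closed in $X$ (points of $X'$ can be limits of isolated points), so $A(I)\times A(X')$ with the product topology is in general \emph{not} homeomorphic to $A(X)$, and the claim that ``$A(I)$ contributes no new convergence phenomena'' is exactly what must be proved. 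A correct argument here requires more work with the Graev extension of a metric on $X$. For (i) with $X$ discrete, the situation is actually simpler than you suggest: the free topological group over a discrete space is itself discrete (the discrete topology is the finest group topology), so countable tightness is trivial and the stratification argument is unnecessary.
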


For the case $X$ is  discrete (hence metrizable) we have the following.
\begin{theorem}[\cite{Gabr}] \label{tFreeSpace}
For each uncountable discrete space $D$, the space $L(D)$ has uncountable tightness.
\end{theorem}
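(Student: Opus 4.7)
The plan is to exhibit a subset $A \subseteq L(D)$ such that $0 \in \overline{A}$ while $0 \notin \overline{B}$ for every countable $B \subseteq A$; this directly witnesses uncountable tightness at the origin of $L(D)$. First I record the description of the topology of $L(D)$ when $D$ is discrete: since every function from $D$ into a locally convex space is continuous, $L(D)$ is precisely the locally convex direct sum $\bigoplus_{x \in D} \mathbb{R} e_x$, where $e_x = i(x)$. A base of neighborhoods of $0$ is given by the absolutely convex sets
$$U_\varepsilon = \Bigl\{\sum_{x} a_x e_x \in L(D) : \sum_{x \in D} |a_x|/\varepsilon(x) \leq 1\Bigr\}, \qquad \varepsilon: D \to (0, \infty).$$
The neighborhood filter at $0$ is thus parametrized by a much richer family than $D$ itself, which is exactly what makes uncountable tightness possible.

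Next I would construct $A$. The design principle is to pick elements $a \in A$ whose (finite) supports collectively cover all of $D$, with coefficients calibrated so that, on the one hand, for every $\varepsilon: D \to (0,\infty)$ some $a \in A$ satisfies $\sum_x |a_x|/\varepsilon(x) \leq 1$, giving $a \in U_\varepsilon$ and hence $0 \in \overline{A}$; and on the other hand, for every countable $B \subseteq A$ the set $D_B := \bigcup_{b \in B} \mathrm{supp}(b)$ remains a countable subset of $D$, so one may define $\varepsilon$ to take sufficiently small values on $D_B$ (the decay rate being dictated by the coefficient structure of the elements of $B$) to force $\sum_x |b_x|/\varepsilon(x) > 1$ for every $b \in B$. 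With such an $A$ in place, the verification of $0 \in \overline{A}$ and $B \cap U_\varepsilon = \emptyset$ reduces to direct computations with the explicit formula for $U_\varepsilon$.

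The main obstacle is the construction of $A$ itself, because the two requirements pull in opposite directions. On the one hand, $0 \in \overline{A}$ forces elements of $A$ to be small with respect to every $U_\varepsilon$; on the other hand, if $A$ contained any sequence $(a_n)$ converging to $0$, that sequence would already be a countable subset of $A$ with $0$ in its closure, defeating the purpose. Reconciling the two requires using the uncountability of $D$ essentially: each element of $A$ must be anchored to genuinely "new" directions in $D$, in such a way that no countable subfamily retains enough of $D$ to accumulate at $0$. This is consistent with the fact, noted in the introduction, that $L(D)$ fails to be a $k$-space (and in particular is non-sequential) for uncountable discrete $D$, but realizing it demands a delicate combinatorial choice of the elements of $A$, and this is the crux of the argument.
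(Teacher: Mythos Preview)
The paper does not prove this theorem at all: it is merely quoted from \cite{Gabr}, so there is no ``paper's own proof'' to compare with. What you have written is therefore being judged on its own merits.

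Your framework is correct. For discrete $D$ the universal property immediately yields $L(D)=\bigoplus_{x\in D}\mathbb{R}\,e_x$ with the locally convex direct-sum topology, and the neighbourhood base you describe via gauges $\varepsilon:D\to(0,\infty)$ is the standard one. The reduction of uncountable tightness to finding a set $A$ with $0\in\overline{A}$ but $0\notin\overline{B}$ for every countable $B\subseteq A$ is also exactly right, and your observation that any countable $B$ has countable total support $D_B\subsetneq D$ is the key leverage point.

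The genuine gap is that you never construct $A$; you only list the properties it must have and then say that producing it ``is the crux of the argument.'' That is precisely the content of the theorem, so as it stands the proposal is an outline, not a proof. To close the gap you must exhibit such an $A$ explicitly. One choice that works is
\[
A=\Bigl\{\tfrac{1}{|F|^{2}}\sum_{x\in F}e_x:\ \emptyset\neq F\subseteq D\ \text{finite}\Bigr\}.
\]
For $0\in\overline{A}$: given $\varepsilon$, some level set $\{x:\varepsilon(x)>1/n_0\}$ is infinite; choosing $F$ inside it with $|F|>n_0$ gives $\sum_{x\in F}\frac{1}{|F|^{2}\varepsilon(x)}<\frac{|F|\,n_0}{|F|^{2}}<1$. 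For the other direction: given countable $B=\{a_{F_k}\}$, enumerate $\bigcup_k F_k=\{y_1,y_2,\dots\}$ and set $\varepsilon(y_j)=\frac{1}{2j}$ (and $\varepsilon\equiv 1$ elsewhere). If $F_k$ has size $m$ and its elements have indices $j_1<\dots<j_m$, then $j_i\ge i$, so $\sum_{x\in F_k}\frac{1}{\varepsilon(x)}=\sum_i 2j_i\ge m(m+1)>m^{2}=|F_k|^{2}$, whence $a_{F_k}\notin U_\varepsilon$. Supplying a construction of this type (there are several variants) is what turns your outline into a proof.
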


The space of all continuous functions on a topological space $X$ endowed with the compact-open topology we denote by $C_c(X)$. It is well known that the space $L(X)$ admits a canonical continuous monomorphism $L(X)\to C_c (C_c (X))$. If $X$ is a $k$-space, this monomorphism is an embedding of lcs \cite{Flo1, Flo2, Usp}. So, for  $k$-spaces, we obtain the next chain of topological embeddings:
\begin{equation} \label{emb}
A(X) \hookrightarrow L(X) \hookrightarrow C_c (C_c (X)).
\end{equation}


Pytkeev \cite{Pyt} proved that every sequential space satisfies the property which is stronger than countable tightness. Following \cite{malyhin}, we say that a topological space $X$ has the {\it Pytkeev property at a point $x\in X$} if for each $A\subseteq X$ with $x\in \overline{A}\setminus A$, there are infinite subsets $A_1, A_2, \dots $ of $A$ such that each neighborhood of $x$ contains some $A_n$. In \cite{boaz} this property is strengthened as follows. A  topological space $X$ has  the {\it strong Pytkeev property at a point $x\in X$} if there exists a countable family $\DD$  of subsets of $X$, which is called a {\it Pytkeev network at $x$},   such that for each neighborhood $U$ of $x$ and  each $A\subseteq X$ with $x\in \overline{A}\setminus A$, there is $D\in\DD$ such that $x\in D\subseteq U$ and $D\cap A$ is infinite. Following \cite{Banakh}, a space $X$ is called a {\it Pytkeev $\aleph_0$-space} if $X$ is regular and has  a countable family $\DD$ which is a Pytkeev network at each point $x\in X$. The strong Pytkeev property for topological groups is thoroughly studied in \cite{GKL2}, where, among others, it is proved that $A(X)$ and $L(X)$ have the strong Pytkeev property for each $\mathcal{MK}_\omega$-space $X$ (i.e., $X$ is the inductive limit of an increasing sequence of compact metrizable subspaces). Note also that in general (see \cite{GKL2}):  Fr\'{e}chet-Urysohness $\not\Rightarrow$  the strong   Pytkeev property  $\not\Rightarrow$  $k$-space.

Recall that a family $\mathcal{D}$ of subsets of a topological space $X$ is called a {\it $k$-network} in $X$ if, for every compact subset $K\subset X$ and each neighborhood $U$ of $K$ there exists a finite subfamily $\mathcal{F}\subset \mathcal{D}$ such that $K\subset \bigcup \mathcal{F}\subset U$. Following Michael \cite{Mich}, a topological space $X$ is called an {\it $\aleph_0$-space} if it is  regular and has a countable $k$-network. Every separable and metrizable space is a Pytkeev $\aleph_0$-space, and every Pytkeev $\aleph_0$-space is an $\aleph_0$-space \cite{Banakh}. We use that following strengthening of Michael's theorem \cite{Mich}:
\begin{theorem}[\cite{Banakh}] \label{tBanakh}
If $X$ is an $\aleph_0$-space, then $C_c(X)$ is a Pytkeev $\aleph_0$-space.
\end{theorem}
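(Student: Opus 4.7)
The plan is to construct a single countable family $\DD$ of subsets of $C_c(X)$ that is a Pytkeev network at every point. Regularity of $C_c(X)$ is automatic since it is a Hausdorff locally convex space. The strategy is the usual one for topological groups: first build a Pytkeev network $\DD_0$ at the origin from the $k$-network of $X$, then translate $\DD_0$ by a countable dense subset of $C_c(X)$.

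For the network at the origin, let $\mathcal{N}$ be a countable $k$-network in $X$, taken closed under finite unions and consisting of compact sets---a refinement available in any $\aleph_0$-space. For $F\in\mathcal{N}$ and $m\in\NN$ set
\[
W(F,m) := \{f\in C_c(X) : |f(x)|\leq 1/m \text{ for every } x\in F\},
\]
and $\DD_0 := \{W(F,m) : F\in\mathcal{N},\, m\in\NN\}$. Any basic open neighborhood of $0$ in $C_c(X)$ has the form $\{f:\sup_{x\in K}|f(x)|<\varepsilon\}$ for some compact $K\subseteq X$ and $\varepsilon>0$. By the $k$-network property applied to $K$ inside the open set $X$, there is $F\in\mathcal{N}$ with $K\subseteq F$; taking $m$ with $1/m<\varepsilon$ yields $W(F,m)\subseteq\{f:\sup_{x\in K}|f(x)|<\varepsilon\}$. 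Compactness of $F$ ensures $W(F,m)$ contains the open set $\{f:\sup_{x\in F}|f(x)|<1/(m+1)\}$, hence is a neighborhood of $0$; since $C_c(X)$ is $T_1$ and $0\in\overline{A}\setminus A$ for the given $A$, every neighborhood of $0$ meets $A$ in infinitely many points, so $W(F,m)\cap A$ is infinite. Thus $\DD_0$ is a Pytkeev network at $0$.

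To obtain one countable family working at every point, Michael's theorem gives $C_c(X)$ itself as an $\aleph_0$-space, hence separable. Fix a countable dense set $\{p_j\}_{j\in\NN}$ and put
\[
\DD := \{p_j + W(F,m) : j\in\NN,\, F\in\mathcal{N},\, m\in\NN\}.
\]
Given $g\in C_c(X)$, a neighborhood $U$ of $g$, and $A$ with $g\in\overline{A}\setminus A$: choose a basic neighborhood $\{h:\sup_{x\in K}|h(x)|<\varepsilon\}\subseteq U-g$, a compact $F\in\mathcal{N}$ with $K\subseteq F$, and $m\in\NN$ with $1/m<\varepsilon/2$; by density pick $p_j$ with $|p_j(x)-g(x)|\leq 1/(2m)$ for every $x\in F$. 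Then $D:=p_j+W(F,m)\in\DD$ contains $g$ in its interior (because $|g-p_j|<1/m$ strictly on $F$), sits inside $g+\{h:\sup_{x\in K}|h(x)|<\varepsilon\}\subseteq U$ (using $1/(2m)+1/m=3/(2m)<\varepsilon$ on $K$), and the $T_1$ argument at $g$ forces $D\cap A$ to be infinite. The main obstacle throughout is the preliminary reduction to a $k$-network of compact sets: this is precisely what turns each $W(F,m)$ into a genuine neighborhood of $0$ and lets the infinite-intersection condition follow from $T_1$-ness; without it one would have to verify infinite intersections by a substantially more delicate direct analysis of how $A$ approaches the origin in the compact-open topology.
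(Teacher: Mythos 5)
Your argument rests on the preliminary claim that every $\aleph_0$-space admits a countable $k$-network consisting of compact sets (closed under finite unions), and that claim is false. If $\mathcal{N}$ were such a family, then applying the $k$-network condition to a compact $K$ with $U=X$ shows every compact subset of $X$ lies in a member of $\mathcal{N}$, i.e.\ $X$ is hemicompact. But $\aleph_0$-spaces need not be hemicompact: $X=\mathbb{Q}$ is separable metrizable, hence an $\aleph_0$-space, yet every compact subset of $\mathbb{Q}$ is nowhere dense, so given any increasing sequence $K_1\subseteq K_2\subseteq\cdots$ of compact sets one can choose $q_n\in\bigl(\mathbb{Q}\cap(-1/n,1/n)\bigr)\setminus K_n$, and then $\{0\}\cup\{q_n:n\in\NN\}$ is a compact set contained in no $K_n$. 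So the ``refinement available in any $\aleph_0$-space'' is unavailable exactly where it is needed.

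This is not a repairable technicality; it is the heart of the theorem. When $X$ is hemicompact, $C_c(X)$ is metrizable and, being an $\aleph_0$-space by Michael's theorem, separable metrizable, hence a Pytkeev $\aleph_0$-space for the trivial reason recorded in the paper; your proof covers only this easy case. For non-hemicompact $X$ (such as $\mathbb{Q}$, and such as the spaces $C_c(X)$ to which the paper applies the theorem a second time), $C_c(X)$ is not first countable, the sets $W(F,m)$ built from a $k$-network of merely closed sets are not neighborhoods of $0$, and the $T_1$ argument giving $|D\cap A|=\infty$ collapses. One must then prove directly that some non-neighborhood $D\in\DD$ with $0\in D\subseteq U$ meets $A$ in an infinite set --- precisely the ``substantially more delicate direct analysis'' you set aside, and precisely the content of Banakh's theorem. (Note the paper itself gives no proof of this statement; it is quoted from Banakh's preprint, so there is no in-paper argument to compare against.)
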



The next theorem is an easy corollary of (\ref{emb}) and Theorem \ref{tBanakh}, the implication (iii)$\Rightarrow$(ii) was first observed by A. Leiderman (see \cite{Banakh}).
\begin{theorem} \label{tAleph}
For a $k$-space $X$ the following assertions are equivalent:
\begin{enumerate}
\item[{\rm (i)}]  $A(X)$  is a Pytkeev $\aleph_0$-space.
\item[{\rm (ii)}]  $L(X)$  is a Pytkeev $\aleph_0$-space.
\item[{\rm (iii)}]  $X$  is a Pytkeev $\aleph_0$-space.
\end{enumerate}
\end{theorem}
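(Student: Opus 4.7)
The plan is to exploit the chain of embeddings (\ref{emb}) together with Theorem \ref{tBanakh}, using the easy observation that being a Pytkeev $\aleph_0$-space is a hereditary property. Specifically, if $Y$ is a subspace of a Pytkeev $\aleph_0$-space $Z$ with countable Pytkeev network $\DD$, then $\DD\!\upharpoonright_Y := \{D\cap Y : D\in\DD\}$ is a countable Pytkeev network for $Y$: a neighborhood of $y\in Y$ has the form $V\cap Y$ for some open $V\subseteq Z$, and for $A\subseteq Y$ with $y\in\overline{A}\setminus A$ any $D\in\DD$ with $y\in D\subseteq V$ and $D\cap A$ infinite yields $D\cap Y\in\DD\!\upharpoonright_Y$ with the required properties; regularity is of course hereditary. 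Once heredity is in hand, the three equivalences fall out of a brief diagram chase.

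For the implication (iii)$\Rightarrow$(ii) (the one attributed to Leiderman), I would apply Theorem \ref{tBanakh} twice. Assuming $X$ is a Pytkeev $\aleph_0$-space, it is in particular an $\aleph_0$-space, so Theorem \ref{tBanakh} gives that $C_c(X)$ is a Pytkeev $\aleph_0$-space, and in particular an $\aleph_0$-space. A second application of Theorem \ref{tBanakh} then shows that $C_c(C_c(X))$ is a Pytkeev $\aleph_0$-space. Since $X$ is assumed to be a $k$-space, (\ref{emb}) provides the embedding $L(X)\hookrightarrow C_c(C_c(X))$, and heredity transfers the Pytkeev $\aleph_0$-space property down to $L(X)$, proving (ii).

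The remaining implications are immediate from heredity applied to (\ref{emb}): from $A(X)\hookrightarrow L(X)$ we obtain (ii)$\Rightarrow$(i), and from the canonical topological embedding $i\colon X\hookrightarrow A(X)$ (which is an embedding by the Markov--Graev results quoted in the text) we obtain (i)$\Rightarrow$(iii). There is no real obstacle in this argument; the only point that requires checking is the hereditary character of the Pytkeev $\aleph_0$-space property, and that is routine from the definition. The substantive content of the theorem is entirely supplied by Theorem \ref{tBanakh} and by the double-dual embedding of $L(X)$ into $C_c(C_c(X))$ valid for $k$-spaces.
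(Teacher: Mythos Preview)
Your proof is correct and follows essentially the same route as the paper: the implications (ii)$\Rightarrow$(i) and (i)$\Rightarrow$(iii) come from heredity along the embeddings $X\hookrightarrow A(X)\hookrightarrow L(X)$, while (iii)$\Rightarrow$(ii) is obtained by applying Theorem~\ref{tBanakh} twice and then using heredity along $L(X)\hookrightarrow C_c(C_c(X))$ from (\ref{emb}). The only difference is that you spell out the heredity of the Pytkeev $\aleph_0$-space property explicitly, whereas the paper invokes it without comment.
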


\begin{proof}
The implications (ii)$\Rightarrow$(i) and (i)$\Rightarrow$(iii) immediately follow from the fact that $A(X)$ is a subspace of $L(X)$ and $X$ is a subspace of $A(X)$.

(iii)$\Rightarrow$(ii). Assume that $X$ is a Pytkeev $\aleph_0$-space. Then $C_c(X)$ and  $C_c (C_c (X))$ are  Pytkeev $\aleph_0$-space by Theorem \ref{tBanakh}.  Since $X$ is a $k$-space,  $L(X)$ is a subspace of $C_c (C_c (X))$ by (\ref{emb}). Thus $L(X)$ is a Pytkeev $\aleph_0$-space.
\end{proof}

We need the next lemma.
\begin{lemma} \label{lFG}
If $U$ is a clopen subset of a Tychonoff space $X$, then $L(U)$ embeds into $L(X)$ as a closed subspace.
\end{lemma}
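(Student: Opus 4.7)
The plan is to exploit the clopenness of $U$ to build, via the universal property of $L(X)$, a continuous linear retraction of $L(X)$ onto $L(U)$ together with a complementary projection $L(X) \to L(X\setminus U)$ whose kernel is exactly $L(U)$. This gives both the embedding and the closedness in one stroke.

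First I would construct the retraction. Since $U$ is clopen, $X\setminus U$ is also clopen, so a function defined on $X$ is continuous iff its restrictions to $U$ and to $X\setminus U$ are both continuous. Let $i_U\colon U\to L(U)$ denote the canonical topological embedding. Define $f\colon X\to L(U)$ by $f(x)=i_U(x)$ for $x\in U$ and $f(x)=0$ for $x\in X\setminus U$. Both restrictions are continuous, so $f$ is continuous, and by the universal property of $L(X)$ there is a unique continuous linear extension $\bar f\colon L(X)\to L(U)$. Symmetrically, define $g\colon X\to L(X\setminus U)$ by $g|_{X\setminus U}=i_{X\setminus U}$ and $g|_U\equiv 0$, and let $\bar g\colon L(X)\to L(X\setminus U)$ be its continuous linear extension.

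Next I would identify the embedding. The composition of the inclusion $j\colon U\hookrightarrow X$ with the canonical map $i_X\colon X\to L(X)$ is continuous, so it lifts to a continuous linear map $\hat j\colon L(U)\to L(X)$. On the Hamel basis $U$ of $L(U)$ one computes $\bar f(\hat j(u))=f(u)=u$, so by linearity $\bar f\circ\hat j=\mathrm{id}_{L(U)}$. A continuous linear map with a continuous linear left inverse is a topological embedding onto its image, so $\hat j$ realises $L(U)$ as a topological linear subspace of $L(X)$.

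Finally I would show $\hat j(L(U))$ is closed. Every element of $L(X)$ has a unique expression as a finite formal sum $\sum a_k x_k + \sum b_\ell y_\ell$ with $x_k\in U$ and $y_\ell\in X\setminus U$, and $\bar g$ sends it to $\sum b_\ell\,i_{X\setminus U}(y_\ell)\in L(X\setminus U)$. Hence $\bar g$ vanishes on such an element iff all $b_\ell=0$, i.e.\ iff the element lies in $\hat j(L(U))$. Therefore $\hat j(L(U))=\ker\bar g$, which is closed in $L(X)$ because $L(X\setminus U)$ is Hausdorff. I do not anticipate any serious obstacle; the only points that need care are the continuity of $f$ and $g$, which rests entirely on $U$ being clopen, and the two basis-level identities $\bar f\circ\hat j=\mathrm{id}_{L(U)}$ and $\hat j(L(U))=\ker\bar g$, both of which are immediate once the maps have been set up.
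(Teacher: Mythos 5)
Your proof is correct. For the embedding itself you follow essentially the same route as the paper: both arguments use the universal property to manufacture a continuous linear left inverse of the canonical map $L(U)\to L(X)$, the only cosmetic difference being that the paper retracts $X$ onto $U$ by sending $X\setminus U$ to a fixed point $e\in U$ and then extends, whereas you send $X\setminus U$ to $0\in L(U)$ directly; both exploit clopenness in exactly the same way to get continuity. Where you genuinely diverge is the closedness of the image. The paper invokes the general fact that the linear span of a \emph{closed} subset of $X$ is closed in $L(X)$, referring to the proof of Proposition 3.8 of Smith--Thomas; that argument works for any closed $U$ but is imported from outside. You instead use the full strength of clopenness a second time to build the complementary projection $\bar g\colon L(X)\to L(X\setminus U)$ and identify $\hat j(L(U))$ with $\ker\bar g$, which is closed since $L(X\setminus U)$ is Hausdorff. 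This is self-contained and shorter, at the price of not generalizing to merely closed $U$ --- a trade-off that costs nothing here, since the lemma assumes $U$ clopen. (A pedantic remark: your argument implicitly assumes $X\setminus U\neq\emptyset$ when forming $L(X\setminus U)$, but in that degenerate case $U=X$ and the statement is trivial.)
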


\begin{proof}
Denote by $\tau_U$ the topology of $L(U)$ and by $L(U)_a$ the underlying free vector space generated by $U$. Fix a point $e$ belonging to $U$. Let $i: U\to X$ be the natural inclusion. By the definition of $L(U)$, $i$ can be extended to a continuous inclusion $\widetilde{i}: L(U)\to L(X)$. So $\tau_U$ is stronger than the topology $\tau^X_U$ induced on $L(U)_a$ from $L(X)$. Define now $p: X\to U$ as follows: $p(x)=x$ if $x\in U$, and $p(x)=e$ if $x\in X\setminus U$. Clearly, $p$ is continuous. By the definition of $L(X)$, $p$ can be extended to a continuous linear mapping $\widetilde{p}: L(X)\to L(U)$. Since $p\circ  i=\mathrm{id}_U$, we obtain $\widetilde{p}\circ \widetilde{i}=\mathrm{id}_{L(U)}$ and $\widetilde{p}$ is injective on $L(U)_a$. So $\tau^X_U$ is stronger than the topology $\tau_U$. Thus $\tau^X_U =\tau_U$.
Since $U$ is a closed subset of $X$ the subspace $L(U,X)$ of $L(X)$ generated by $U$ is closed
(we can repeat word for word the proof of Proposition 3.8 in \cite{STh}). Thus $\widetilde{i}$ is an embedding of $L(U)$ onto the closed subspace $L(U,X)$ of $L(X)$.
\end{proof}

Now Theorem \ref{tFreeMain} is a part of the following theorem.

\begin{theorem} \label{tFreeV}
For a metrizable space $X$ the following assertions are equivalent:
\begin{enumerate}
\item[{\rm (i)}]  $L(X)$ is a Pytkeev $\aleph_0$-space.
\item[{\rm (ii)}]  $L(X)$ has countable tightness.
\item[{\rm (iii)}]  $X$ is separable.
\end{enumerate}
\end{theorem}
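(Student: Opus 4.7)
The plan is to close the cycle (iii)$\Rightarrow$(i)$\Rightarrow$(ii)$\Rightarrow$(iii); the first two links are easy packaging of results already in hand, and almost all the content of the theorem sits in the contrapositive of (ii)$\Rightarrow$(iii).

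For (iii)$\Rightarrow$(i): a separable metrizable $X$ is second countable, and any countable base is simultaneously a Pytkeev network at each point, because first countability forces every $A$ with $x\in\overline{A}\setminus A$ to contain a sequence converging to $x$, so every basic neighborhood of $x$ meets $A$ in an infinite set. Since a metrizable space is a $k$-space, Theorem \ref{tAleph} promotes this to $L(X)$. For (i)$\Rightarrow$(ii): every Pytkeev $\aleph_{0}$-space is an $\aleph_{0}$-space, hence cosmic (a $k$-network is, in particular, a network), hence of countable tightness.

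For (ii)$\Rightarrow$(iii) I argue the contrapositive: assume $X$ is metrizable but not separable and split on whether the closed set $X'$ of non-isolated points is separable. If $X'$ is \emph{not} separable, Theorem \ref{tAOP}(ii) already yields that $A(X)$ has uncountable tightness, and the topological embedding $A(X)\hookrightarrow L(X)$ recalled at the start of Section 2, together with the monotonicity of tightness under taking subspaces, transfers uncountable tightness to $L(X)$. If $X'$ \emph{is} separable, then the non-separability of $X=X'\cup D$ forces the set $D:=X\setminus X'$ of isolated points to be uncountable. Fixing a compatible metric $d$, I set
\[
D_{n}:=\bigl\{x\in D:d(x,X\setminus\{x\})>1/n\bigr\},
\]
so that $D=\bigcup_{n\in\NN}D_{n}$ and some $D_{n}$ must be uncountable. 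The key observation is that this $D_{n}$ is \emph{clopen} in $X$: it is open as a union of isolated singletons of $X$, and it is closed because any two distinct points of $D_{n}$ lie at distance greater than $1/n$, so any sequence in $D_{n}$ convergent in $X$ is eventually constant with limit in $D_{n}$. Lemma \ref{lFG} then embeds $L(D_{n})$ into $L(X)$ as a closed subspace, Theorem \ref{tFreeSpace} supplies $L(D_{n})$ with uncountable tightness, and this is inherited by $L(X)$.

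The main obstacle is thus the second subcase: one needs an \emph{uncountable clopen discrete} piece of $X$, not merely an uncountable closed discrete one, in order to be allowed to invoke Lemma \ref{lFG}. The point is that in a metric space the uniformly-isolated sets $D_{n}$ are simultaneously open (isolated singletons) and closed ($(1/n)$-separated sets have no non-trivial limits), so they come for free as clopen subsets without any appeal to finer separation or retraction arguments.
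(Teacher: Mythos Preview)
Your proof is correct and follows essentially the same route as the paper: the implication (iii)$\Rightarrow$(i) via Theorem \ref{tAleph}, (i)$\Rightarrow$(ii) trivially, and (ii)$\Rightarrow$(iii) by combining Theorem \ref{tAOP}(ii) for the non-isolated part $X'$ with the extraction of an uncountable uniformly-isolated (hence clopen) discrete subset $D_n\subset D$, to which Lemma \ref{lFG} and Theorem \ref{tFreeSpace} apply. The only difference is organizational: the paper argues (ii)$\Rightarrow$(iii) directly (first concluding $X'$ separable from countable tightness of $A(X)$, then deriving a contradiction from $D$ uncountable), whereas you run the contrapositive with an explicit case split on the separability of $X'$; the mathematical content is identical.
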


\begin{proof}
(i)$\Rightarrow$(ii) is clear. Let us prove 
(ii)$\Rightarrow$(iii). Since $A(X)$ is a subgroup of $L(X)$, we obtain that $A(X)$ also has countable tightness. Now Theorem \ref{tAOP} implies that the set $X'$ of all non-isolated points of $X$ is separable. So we have to show only that the set $D$ of all isolated points of $X$ is countable.

Suppose for a contradiction that $D$ is uncountable. Then there is a positive number $c$ and an uncountable subset $D_0$ of $D$ such that $B_c(d)=\{ d\}$ for every $d\in D_0$, where $B_c(d)$ is the $c$-ball centered at $d$. It is easy to see that $D_0$ is a clopen  subset of $X$. So, by Lemma \ref{lFG}, $L(D_0)$ is a subspace of $L(X)$. Now Theorem \ref{tFreeSpace} yields that $L(D_0)$ and hence $L(X)$ have uncountable tightness. This contradiction shows that $D$ is countable. Thus $X$ is separable.

(iii)$\Rightarrow$(i) immediately follows from Theorem \ref{tAleph}.
\end{proof}

We do not know whether the assertions (i) and (ii) in Theorem \ref{tAOP} are equivalent respectively  to the following: $F(X)$ is a Pytkeev $\aleph_0$-space, and $A(X)$ has the strong Pytkeev property.


\bibliographystyle{amsplain}

\end{document}